\documentclass[a4paper,10pt]{article}
\usepackage[english]{babel}
\usepackage[utf8]{inputenc}
\usepackage[T1]{fontenc}
\usepackage{textcomp} 
\usepackage{amsmath}
\usepackage{enumerate}
\usepackage{amsfonts}
\usepackage{amsthm}
\usepackage{mathrsfs}
\usepackage{amssymb}
\usepackage{comment}
\usepackage{csquotes}
\usepackage{bm}
\usepackage{graphicx}
\usepackage{bbm}
\usepackage{array}
\usepackage{ragged2e}
\usepackage{mathtools}
\usepackage{xcolor}
\usepackage{lipsum}
\usepackage{tabularx}
\usepackage[mathscr]{euscript}
\DeclareSymbolFont{rsfs}{U}{rsfs}{m}{n}
\DeclareSymbolFontAlphabet{\mathscrsfs}{rsfs}

\newcommand{\keywords}[1]{\par\noindent{\small\textbf{Keywords:} #1}}
\newcommand{\subjclass}[1]{\par\noindent{\small\textbf{AMS 2010 Subject Classification:} #1}}

\theoremstyle{definition}
\newtheorem{Def}{Definition}[section]

\theoremstyle{plain}
\newtheorem{Prop}[Def]{Proposition}
\newtheorem{thm}[Def]{Theorem}
\newtheorem{Thm}[Def]{Theorem}
\newtheorem{Lemma}[Def]{Lemma}

\newtheorem{rem}[Def]{Remark}

\newcommand{\R}{\mathbb{R}}

\renewcommand{\P}{\mathbb{P}}
\newcommand{\E}{\mathbb{E}}

\newcommand{\N}{\mathbb{N}}

\newcommand\HHH{\mathfrak{H}}
\newcommand\D{\mathbb{D}}

\newcommand\proj{\rm proj}

\DeclareMathOperator*{\Var}{Var}
\DeclareMathOperator*{\Cov}{Cov}

\newcolumntype{M}[1]{>{\raggedright}m{#1}}

\renewcommand{\epsilon}{\varepsilon}

\begin{document}

\title{Absolute continuity of finite-dimensional distributions of Hermite processes via Malliavin calculus}
\author{
  Laurent Loosveldt\thanks{Université de Liège, Département de Mathématiques, Liège, Belgium}%
  \and
  Yassine Nachit\thanks{Université de Lille, Laboratoire Paul Painlevé, Lille, France}%
  \and
    Ivan Nourdin\thanks{University of Luxembourg, Department of Mathematics, Esch-sur-Alzette, Luxembourg}%
  \and
  Ciprian Tudor\thanks{Université de Lille, Laboratoire Paul Painlevé, Lille, France}%
}
\date{}

\maketitle

\begin{abstract}
We investigate the existence of densities for finite-dimensional distributions of Hermite processes of order \(q \ge 1\) and self-similarity parameter \(H\in(\frac12,1)\). Whereas the Gaussian case \(q=1\) (fractional Brownian motion) is well understood, the non-Gaussian situation has not yet been settled. In this work, we extend the classical three-step approach used in the Gaussian case: factorization of the determinant into conditional terms, strong local nondeterminism, and non-degeneracy. We transport this strategy to the Hermite setting using Malliavin calculus. Specifically, we establish a determinant identity for the Malliavin matrix, prove strong local nondeterminism at the level of Malliavin derivatives, and apply the Bouleau-Hirsch criterion. Consequently, for any distinct times \(t_1,\dots,t_n\), the vector \((Z^{H,q}_{t_1},\dots,Z^{H,q}_{t_n})\) of a Hermite process admits a density with respect to the  Lebesgue measure. Beyond the result itself, the main contribution is the methodology, which could extend to other non-Gaussian models.
\end{abstract}

\keywords{Hermite processes; Rosenblatt process; Malliavin calculus; Bouleau--Hirsch criterion; strong local nondeterminism; density of finite-dimensional distributions}

\subjclass{60G22; 60G18; 60H07; 60F05}

	\maketitle

\section{Introduction}

A cornerstone of Malliavin calculus is the \emph{Bouleau--Hirsch criterion} \cite{BouleauHirsch1984}, which asserts that a random vector 
$(Z_1,\dots,Z_n)$
with Malliavin differentiable components has an absolutely continuous distribution with respect to the Lebesgue measure on $\R^n$ as soon as its \emph{Malliavin matrix}
\[
\Big(\,\langle D Z_i,D Z_j\rangle\,\Big)_{1\le i,j\le n}
\]
is almost surely non-singular. This extends the Gaussian situation, where absolute continuity is equivalent to the non-degeneracy of the covariance matrix, which in that case coincides with the Malliavin matrix.

 \medskip

The case of vectors of multiple Wiener--Itô integrals has been recently investigated. For $n=1$, the situation is fully understood: a classical theorem of Shigekawa \cite{Shigekawa} ensures that any nontrivial multiple integral has a density with respect to Lebesgue measure. For higher-dimensional vectors, the problem becomes subtler. Nourdin, Nualart and Poly \cite{NNP} established a general dichotomy: if the components of a vector belong to a finite sum of Wiener chaoses, then either its law is absolutely continuous, or there exists a nontrivial polynomial relation annihilating the vector; in particular, the existence of a density can often be decided from structural properties of the chaos expansions. In a complementary direction, Nualart and Tudor \cite{NT} analyzed the case of a \emph{pair} $(I_q(f),I_q(g))$ of multiple integrals of the same order. They proved that such a pair fails to have a density if and only if the two integrals are proportional. They also noticed that this criterion does not hold in dimension bigger or equal than three. Their result provides a sharp criterion in dimension two, while the present paper focuses on vectors of  \emph{arbitrary} size with a specific stochastic process structure.

 \medskip
 
Our object of study is the class of \emph{Hermite processes} of order $q\ge1$ and self-similarity parameter $H\in(\frac12,1)$. They are non-Gaussian (except for $q=1$), self-similar, have stationary increments, live in the $q$-th Wiener chaos, and arise naturally as limits in the non-central limit theorems of Dobrushin--Major \cite{DM} and Taqqu \cite{Taqqu79}. For $q=1$, one recovers \emph{fractional Brownian motion}, the unique Gaussian self-similar process with stationary increments. The case $q=2$ corresponds to the \emph{Rosenblatt process}, first identified by Taqqu \cite{Taqqu75} in the study of quadratic functionals of long-range dependent Gaussian sequences, and also appearing in Rosenblatt’s work \cite{Rosenblatt} on non-Gaussian limit laws. The Rosenblatt process has since become the canonical example of a non-Gaussian, long-memory, self-similar process, often regarded as the natural analogue of fractional Brownian motion in the non-Gaussian world (see also Tudor \cite{Tudor08}). For higher orders $q\ge 3$, Hermite processes provide further canonical models of long-range dependence beyond Gaussianity.  

 \medskip

From a probabilistic perspective, Hermite processes occupy a central place in the limit theory of dependent time series. Whenever one considers normalized partial sums of a Gaussian sequence with long memory, transformed by a non-linear function of Hermite rank $q$, the limit in distribution is a Hermite process of order $q$. This universality explains their fundamental role in non-central limit theorems, time series analysis, and statistical inference for long-memory models. We refer to the surveys of Taqqu \cite{Taqqu03}, the monograph of Tudor \cite{Tudor23}, and the book by Pipiras and Taqqu \cite{PipirasTaqqu17} for a detailed account of their properties and applications.

\medskip

By the Bouleau--Hirsch criterion \cite{BouleauHirsch1984}, to prove that the finite-dimensional vector \((Z_1,\dots,Z_n)\) has a density it is enough to show that its Malliavin matrix is almost surely non-singular. In the case of Hermite processes, our strategy is to control the determinant of this matrix by combining two ingredients: a  factorization identity for the Malliavin matrix, and a strong local nondeterminism estimate formulated at the level of Malliavin derivatives. This leads to the following theorem.

\begin{thm}\label{thm1}
Let $n\ge 2$ and let $0<t_1<\cdots<t_n$. 
 Let $Z^{H,q}$ be a Hermite process of order $q\ge 1$ and self-similarity index $H\in(\frac12,1)$.
Then, the random vector
$
(Z^{H,q}_{t_1},\dots,Z^{H,q}_{t_n}),
$
admits a density with respect to Lebesgue measure on $\R^n$. 
\end{thm}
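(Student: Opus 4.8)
The plan is to follow the three-step scheme announced in the introduction, carried out at the level of Malliavin derivatives. First I would fix the time-domain representation $Z^{H,q}_t=I_q(f_t)$ of the Hermite process as a $q$-fold Wiener--It\^o integral against the underlying white noise on $L^2(\R)$, with kernel
\[
f_t(\xi_1,\dots,\xi_q)=c_{H,q}\int_0^t\prod_{j=1}^q(s-\xi_j)_+^{-(\frac12+\frac{1-H}{q})}\,ds .
\]
The structural feature that drives everything is the \emph{causality} of this kernel, namely $\supp f_t\subseteq\{\max_j\xi_j<t\}$, so that $f_t(\cdot,x)=0$ in $L^2(\R^{q-1})$ whenever $x\ge t$; since $D_xI_q(f_t)=q\,I_{q-1}(f_t(\cdot,x))$, this forces $D_xZ^{H,q}_t=0$ for a.e.\ $x\ge t$, almost surely. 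As each $Z^{H,q}_{t_i}$ lies in $\D^{1,2}$, the Bouleau--Hirsch criterion \cite{BouleauHirsch1984} reduces the theorem to showing that the Malliavin matrix $\gamma=\big(\langle DZ^{H,q}_{t_i},DZ^{H,q}_{t_j}\rangle_{L^2(\R)}\big)_{1\le i,j\le n}$ is a.s.\ invertible, i.e.\ $\det\gamma>0$ a.s.

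For the factorization step, I would fix $\omega$, view $v_k:=D_\cdot Z^{H,q}_{t_k}(\omega)$ as a vector of the Hilbert space $L^2(\R)$, note that $\gamma(\omega)$ is then the Gram matrix of $v_1,\dots,v_n$, and invoke the Gram--Schmidt identity
\[
\det\gamma=\prod_{k=1}^n\dist\big(v_k,\ \mathrm{span}\{v_1,\dots,v_{k-1}\}\big)^2 ,
\]
whose $k$-th factor plays the role of the ``conditional variance'' of $DZ^{H,q}_{t_k}$ given $DZ^{H,q}_{t_1},\dots,DZ^{H,q}_{t_{k-1}}$. Strong local nondeterminism then enters at the Malliavin level: by causality, for $j\le k-1$ the function $v_j$ vanishes a.e.\ on $(t_{k-1},t_k)$ (because $t_j\le t_{k-1}$), hence so does every element of $\mathrm{span}\{v_1,\dots,v_{k-1}\}$, and therefore, with $t_0:=0$,
\[
\dist\big(v_k,\ \mathrm{span}\{v_1,\dots,v_{k-1}\}\big)^2\ \ge\ \int_{t_{k-1}}^{t_k}\big(D_xZ^{H,q}_{t_k}\big)^2\,dx .
\]
Multiplying over $k$ gives $\det\gamma\ \ge\ \prod_{k=1}^n\int_{t_{k-1}}^{t_k}(D_xZ^{H,q}_{t_k})^2\,dx$ almost surely, so it remains to prove that each factor on the right is a.s.\ strictly positive.

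For this last, crucial point I would argue as follows. Let $A_k$ be the event on which $\int_{t_{k-1}}^{t_k}(D_xZ^{H,q}_{t_k})^2\,dx=0$. Using a jointly measurable version of the derivative and Tonelli's theorem,
\[
0=\E\Big[\1_{A_k}\int_{t_{k-1}}^{t_k}\big(D_xZ^{H,q}_{t_k}\big)^2\,dx\Big]=\int_{t_{k-1}}^{t_k}\E\big[\1_{A_k}\big(D_xZ^{H,q}_{t_k}\big)^2\big]\,dx ,
\]
whence $\1_{A_k}D_xZ^{H,q}_{t_k}=0$ a.s.\ for Lebesgue-a.e.\ $x\in(t_{k-1},t_k)$. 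From the explicit kernel one checks that $f_{t_k}(\cdot,x)$ is a nonzero element of $L^2(\R^{q-1})$ for a.e.\ such $x$ (its defining integrand is strictly positive on an open set), so I may fix one such $x_0$ that additionally satisfies $\1_{A_k}D_{x_0}Z^{H,q}_{t_k}=0$ a.s. When $q\ge2$, $D_{x_0}Z^{H,q}_{t_k}=q\,I_{q-1}(f_{t_k}(\cdot,x_0))$ is a nontrivial element of the Wiener chaos of order $q-1$, so Shigekawa's theorem \cite{Shigekawa} gives $\P(D_{x_0}Z^{H,q}_{t_k}=0)=0$; when $q=1$, $D_{x_0}Z^{H,q}_{t_k}=f_{t_k}(x_0)$ is a nonzero constant. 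Either way, $\1_{A_k}D_{x_0}Z^{H,q}_{t_k}=0$ a.s.\ forces $\P(A_k)=0$, and the proof is then complete by Bouleau--Hirsch.

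The main obstacle is precisely the upgrade from positivity ``in the mean'' to positivity ``almost surely''. The expectation bound $\E\big[\int_{t_{k-1}}^{t_k}(D_xZ^{H,q}_{t_k})^2\,dx\big]$ equals, up to a positive constant, $\int_{t_{k-1}}^{t_k}\|f_{t_k}(\cdot,x)\|^2_{L^2(\R^{q-1})}\,dx>0$, but on its own it only yields positivity on a set of positive probability; bridging the gap forces the non-Gaussian analysis, with Shigekawa's absolute-continuity theorem for a fixed chaos standing in for the triviality ``a nonzero Gaussian variable is a.s.\ nonzero'' that suffices when $q=1$. A secondary difficulty is that $\gamma$ is genuinely random for $q\ge2$ — unlike the deterministic covariance matrix of the Gaussian theory — so the strong local nondeterminism estimate must be established \emph{pathwise} in $L^2(\R)$, which is what the causal support of $f_t$ delivers, and the Fubini manipulation above must be underpinned by joint measurability of $(\omega,x)\mapsto D_xZ^{H,q}_{t_k}$ and by the nontriviality of the kernel slices $f_{t_k}(\cdot,x)$ on the relevant interval.
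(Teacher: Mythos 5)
Your proposal is correct, and it reaches the conclusion by a genuinely different route from the paper on two of the three steps. The Bouleau--Hirsch reduction and the Gram--Schmidt factorization of $\det\gamma$ are the same as in the paper (Lemma \ref{Lemma:detGamma}). Where you diverge is in the strong local nondeterminism step: you exploit only the causality $D_xZ^{H,q}_{t_j}=0$ for $x>t_j$ (property \eqref{eq:adapt}) to get the \emph{pathwise} bound $\dist(v_k,E_{k-1})^2\ge\int_{t_{k-1}}^{t_k}(D_xZ^{H,q}_{t_k})^2\,dx$, since every element of $E_{k-1}$ vanishes a.e.\ on $(t_{k-1},t_k)$. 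The paper instead first proves self-similarity and stationarity of increments \emph{at the level of the Malliavin derivative} (Proposition \ref{Prop:Zt9999}, a nontrivial computation with the product formula and contractions) and uses them to reduce, via an inequality in law, to the single reference quantity $(t_j-t_{j-1})^{2H}\|DZ^{H,q}_1\|_{\HHH_1}^2$ --- note that the paper's final step in Theorem \ref{Thm:MLNDDiscrete} is exactly your causality argument, applied after translation and rescaling. Your route is shorter and avoids Section \ref{sec:malliavinselfsimilarity} entirely, but it only yields qualitative positivity, whereas the paper's Theorem \ref{Thm:MLNDDiscrete} delivers the quantitative $2H$-scaling that mirrors Pitt's Gaussian SLND and is of independent interest. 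For the remaining positivity step, the paper invokes Lemma \ref{lem:positive} (a.s.\ positivity of $\|DZ^{H,q}_1\|_{\HHH_1}^2$, proved by adapting the dichotomy argument of \cite{NNP}), while you give a self-contained argument: Tonelli to localize the degeneracy at a fixed $x_0$ with $f_{t_k}(\cdot,x_0)\neq0$, then Shigekawa's theorem applied to the $(q-1)$-st order integral $I_{q-1}(f_{t_k}(\cdot,x_0))$ to rule out $\P(A_k)>0$, with the constant case handled separately for $q=1$. This is a valid substitute for Lemma \ref{lem:positive} and correctly identifies the upgrade from positivity in mean to a.s.\ positivity as the genuinely non-Gaussian difficulty.
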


In order to motivate our approach for proving Theorem \ref{thm1}, let us first recall the classical Gaussian case $q=1$ (fractional Brownian motion). 
In that case, a simple yet elegant three-step argument establishes that the finite-dimensional distributions possess a density; in other words, Theorem \ref{thm1} holds when $q=1$.
This classical scheme will serve as a blueprint for our generalization to the Hermite setting.

\medskip
\noindent\textbf{Sketch of the proof of Theorem \ref{thm1} when $q=1$.}
Let $B^H=(B^H_t)_{t\ge0}$ denote a fractional Brownian motion with Hurst parameter $H\in(0,1)$. 
Fix $0<t_1<\cdots<t_n$ and set $Z_k:=B^H_{t_k}$.  

\medskip

\emph{Step 1 (determinant and conditional variances).}  
For any Gaussian vector, the determinant of its covariance matrix is equal to the product of conditional variances (see, e.g., \cite[Eq. (2.8) p. 71]{Berman}):
\begin{equation}\label{det}
\det \Cov (Z_1,\dots,Z_n)\;=\;\Var(Z_1)\prod_{k=2}^n \Var(Z_k\mid Z_1,\dots,Z_{k-1}).
\end{equation}

\medskip

\emph{Step 2 (SLND).} 
By using the self-similarity and the stationarity of the increments, Pitt \cite[Lemma 7.1]{Pitt} showed that the fractional Brownian motion enjoys the \emph{strong local nondeterminism} property: there exists $c_H>0$ such that
\begin{equation}\label{eq:SLND-general}
\Var\!\Big(Z_{k}\,\Big|\,Z_{1},\dots,Z_{{k-1}}\Big) \;\ge\; c_H\,(t_k-t_{k-1})^{2H}, \qquad k=2,\dots,n.
\end{equation}

\medskip

\emph{Step 3 (conclusion).}  
Since $(Z_1,\dots,Z_n)$ is Gaussian, non-degeneracy of its covariance matrix implies absolute continuity. 
Combining Steps~1 and 2, we obtain
\[
\det \Cov(Z_1,\dots,Z_n)\;\ge\;c_H^n\prod_{k=1}^n (t_k-t_{k-1})^{2H}\;>\;0,
\]
so the law of $(Z_{1},\dots,Z_{n})$ has a density.

\medskip
 
In the non-Gaussian case $q \ge 2$, we adopt the same guiding philosophy, but the passage from the Gaussian to the non-Gaussian setting requires substantial innovations: the covariance matrix is replaced by the Malliavin matrix, and the determinant identity \eqref{det} for the covariance matrix by the corresponding identity for the Malliavin matrix, see Lemma \ref{Lemma:detGamma} below. Likewise, SLND is replaced by a new analogue at the level of Malliavin derivatives. To establish this extension of SLND, we prove a self-similarity and stationarity of increments property for the Malliavin derivative.
 
 \medskip
 
This new framework not only yields absolute continuity of Hermite finite-dimensional distributions but, more importantly, it provides a robust methodological advance: we design a systematic way to transport the classical Gaussian scheme into the non-Gaussian Malliavin setting. Beyond the scope of Hermite processes, we believe this methodology opens the door to treating absolute continuity questions for a wide range of non-Gaussian processes representable as multiple Wiener–Itô integrals.
 
\medskip
The rest of the paper is organized as follows.
Section~\ref{sec:Malliavin} recalls the necessary background on Malliavin calculus.  
In Section~\ref{sec:Hermite} we present the definition and main properties of Hermite processes.  
Section~\ref{sec:det} contains a determinant identity for the Malliavin matrix. 
In Section~\ref{sec:malliavinselfsimilarity} we establish self-similarity and stationarity of increments for  the Malliavin derivative.  
Section~\ref{sec:SLND} develops the analogue of the strong local nondeterminism property in this framework.  
Finally, Section~\ref{sec:proof} combines these ingredients with the Bouleau--Hirsch criterion to prove Theorem~\ref{thm1}.

\section{Preliminaries on Malliavin calculus}\label{sec:Malliavin}

This section recalls the basic tools of Malliavin calculus that will be used throughout the paper.  
We refer to the standard references \cite{nourdin2012normal, Nualart, Sanz-Sole} for a complete account.

\medskip
\noindent\textbf{Isonormal Gaussian process.}  
Let $B=\{B(t),\,t\in\R\}$ be a two-sided  Brownian motion on a complete probability space $(\Omega,\mathcal F,\P)$, equipped with its natural filtration and such that $B(0)=0$.  
We denote by $\HHH=L^2(\R)$ the canonical Hilbert space associated with $B$. For each $h\in\HHH$, the Wiener integral
\[
B(h)=\int_\R h(t)\,dB(t)
\]
is defined first for step functions and then extended by density.  
The family $\{B(h):h\in\HHH\}$ forms an \emph{isonormal Gaussian process} over $\HHH$, that is, a centred Gaussian family satisfying
\[
\E[B(h)B(g)] = \langle h,g\rangle_{\HHH}, \qquad h,g\in\HHH.
\]

\medskip
\noindent\textbf{Hermite polynomials and Wiener chaoses.}  
Let $(H_q)_{q\ge0}$ be the Hermite polynomials, defined by the recursion
\[
H_0(x)=1,\quad H_1(x)=x,\quad H_{q+1}(x)=xH_q(x)-qH_{q-1}(x).
\]
For $q\ge1$, the \emph{$q$th Wiener chaos} $\mathcal H_q$ is defined as the closed linear span in $L^2(\Omega)$ of the random variables
\[
\{H_q(B(h)):\, h\in\HHH,\ \|h\|_{\HHH}=1\}.
\]
In particular, $\mathcal H_1$ is the Gaussian chaos generated by $B(h)$, while $\mathcal H_q$ is non-Gaussian as soon as $q\ge2$.  
The family $(\mathcal H_q)_{q\ge0}$ forms an orthogonal decomposition
\[
L^2(\Omega)=\bigoplus_{q=0}^\infty \mathcal H_q,
\]
known as the \emph{Wiener–Itô chaos expansion}.

\medskip
\noindent\textbf{Multiple Wiener–Itô integrals.}  
For $q\ge1$ and $h\in\HHH$ with $\|h\|_{\HHH}=1$, we define
\[
I_q(h^{\otimes q}) := H_q(B(h)).
\]
If $\|h\|_{\HHH}\ne1$, one sets $I_q(h^{\otimes q}) := \|h\|_{\HHH}^q H_q(B(h/\|h\|_{\HHH}))$.  
For a general elementary tensor $h_1\otimes\cdots\otimes h_q\in\HHH^{\otimes q}$ (the symmetric tensor product), one defines $I_q(h_1\otimes\cdots\otimes h_q)$ by polarization, and for a general symmetric function $f\in\HHH^{\odot q}$, one extends linearly and by density.  
This yields a well-defined linear operator
\[
I_q:\HHH^{\odot q}\to L^2(\Omega).
\]
satisfying the isometry
\[
\E[I_p(f)I_q(g)] =
\begin{cases}
p!\,\langle f,g\rangle_{\HHH^{\otimes p}}, & p=q,\\
0, & p\ne q,
\end{cases}
\]
for all $f\in\HHH^{\odot p}$, $g\in\HHH^{\odot q}$.  

One has the fundamental equivalence
$$
\mathcal H_q = \{ I_q(f): f\in \HHH^{\odot q}\}, \qquad q\ge1,
$$
which shows that the two constructions of Wiener chaos (via Hermite polynomials  or via multiple integrals) are identical.

\medskip

\noindent\textbf{Product formula for multiple integrals.}  
A key tool is the product formula: for $f\in\HHH^{\odot p}$ and $g\in\HHH^{\odot q}$,
\begin{equation}\label{prod}
I_p(f)I_q(g)= \sum_{r=0}^{p\wedge q} r!\binom{p}{r}\binom{q}{r}\,
I_{p+q-2r}\big(f\widetilde{\otimes}_r g\big),
\end{equation}
where $f\otimes_r g$ denotes the $r$-contraction of $f$ and $g$, defined by
\begin{align}\label{contra}
(f\otimes_r g)(t_{1},\ldots,t_{p+q-2r})
&= \int_{\R^{r}} f(u_{1},\ldots,u_{r}, t_{1},\ldots,t_{p-r}) \nonumber \\
&\quad \times g(u_{1},\ldots,u_{r}, t_{p-r+1},\ldots,t_{p+q-2r})\,du_{1}\cdots du_{r},
\end{align}
for $1\le r\le p\wedge q$, and $f\otimes_0 g = f\otimes g$ is the tensor product.  
In general, $f\otimes_r g\in\HHH^{\otimes(p+q-2r)}$ is not symmetric, and we denote by $f\widetilde{\otimes}_r g$ its symmetrization.  
For more details, see e.g. Section 1.1.2 in \cite{Nualart}.

\medskip
\noindent\textbf{Malliavin derivative.}  
Let $\mathcal S$ be the class of \emph{smooth cylindrical random variables} of the form
\[
F = f(B(g_1),\dots,B(g_n)),
\]
with $n\ge1$, $g_1,\dots,g_n\in\HHH$, and $f\in C^\infty_P(\R^n)$ (i.e., smooth with all derivatives of polynomial growth).  
The \emph{Malliavin derivative} of $F$ is the stochastic process
\[
D_r F = \sum_{j=1}^n \frac{\partial f}{\partial x_j}(B(g_1),\dots,B(g_n))\, g_j(r), \qquad r\in\R.
\]
By iteration, one defines higher-order derivatives $D^kF$.  
For $k,p\ge1$, the Sobolev-type space $\D^{k,p}$ is the closure of $\mathcal S$ with respect to the norm
\[
\|F\|_{k,p}^p = \E[|F|^p] + \sum_{j=1}^k \E\!\left[\|D^jF\|_{\HHH^{\otimes j}}^p\right],
\]
and we set $\D^\infty = \bigcap_{k,p\ge1}\D^{k,p}$.  
When $F,G\in \D^{1,2}$, the inner product of their derivatives reads
\[
\langle DF,DG\rangle_\HHH = \int_\R D_rF\,D_rG\,dr.
\]

\medskip
\noindent\textbf{Bouleau--Hirsch criterion.}  
A central application of Malliavin calculus is the analysis of regularity of distributions.  
As already mentioned, the following classical result will play a key role.

\begin{Thm}[Bouleau--Hirsch \cite{BouleauHirsch1984}]\label{Thm:2.1.2}
Let ${\bf Z}=(Z_1,\dots,Z_n)$ be a random vector such that each $Z_i\in\D^{1,p}$ for some $p>1$.  
If its Malliavin matrix
\[
\Gamma_{\bf Z} = \big(\langle DZ_i,DZ_j\rangle_\HHH\big)_{1\le i,j\le n}
\]
is a.s. non-degenerate, i.e. $\det(\Gamma_{\bf Z})>0$ a.s., then the law of ${\bf Z}$ is absolutely continuous with respect to the Lebesgue measure on $\R^n$.
\end{Thm}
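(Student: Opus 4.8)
The plan is to decouple the argument into a purely analytic reduction and a Malliavin-calculus input, isolating the lack of integrability of the inverse Malliavin matrix as the genuine difficulty.

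First I would record the analytic criterion that drives everything: a finite measure $\mu$ on $\R^n$ whose distributional first-order partial derivatives are all finite signed measures is absolutely continuous with respect to Lebesgue measure. To see this I would mollify, setting $\mu_\eta=\mu*\rho_\eta$ for a mollifier $\rho_\eta$; then $\nabla\mu_\eta=(\nabla\mu)*\rho_\eta$ has $L^1$-norm bounded by the total variation of $\nabla\mu$ uniformly in $\eta$, so the Gagliardo--Nirenberg--Sobolev inequality $\|\mu_\eta\|_{L^{n/(n-1)}}\le C\|\nabla\mu_\eta\|_{L^1}$ (read as $L^\infty$ when $n=1$) keeps $\{\mu_\eta\}$ bounded in $L^{n/(n-1)}$; extracting a weak limit $g$ and comparing with $\mu_\eta\to\mu$ weakly as measures gives $\mu=g\,dx$. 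Applied to $\mu=\mathcal{L}({\bf Z})$, the law of ${\bf Z}$, this reduces the theorem to producing, for each coordinate $i$, a constant $C_i$ such that $|\E[\partial_i\phi({\bf Z})]|\le C_i\|\phi\|_\infty$ for all $\phi\in C_c^\infty(\R^n)$.

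Next I would generate these inequalities by Malliavin integration by parts. The chain rule gives $D(\phi({\bf Z}))=\sum_j\partial_j\phi({\bf Z})\,DZ_j$, and taking the $\HHH$-inner product with $DZ_k$ yields
\[
\langle D(\phi({\bf Z})),DZ_k\rangle_\HHH=\sum_{j=1}^n(\Gamma_{\bf Z})_{jk}\,\partial_j\phi({\bf Z}),
\]
so that on the a.s. event $\{\det\Gamma_{\bf Z}>0\}$ the system inverts to
\[
\partial_i\phi({\bf Z})=\sum_{k=1}^n(\Gamma_{\bf Z}^{-1})_{ik}\,\langle D(\phi({\bf Z})),DZ_k\rangle_\HHH .
\]
Taking expectations and moving $D$ off $\phi({\bf Z})$ through the duality $\E[\langle DG,u\rangle_\HHH]=\E[G\,\delta(u)]$, where $\delta$ is the adjoint of $D$ (the divergence operator), applied to the $\HHH$-valued weight $u_i=\sum_k(\Gamma_{\bf Z}^{-1})_{ik}DZ_k$, would give $\E[\partial_i\phi({\bf Z})]=\E[\phi({\bf Z})\,\delta(u_i)]$ and hence the bound with $C_i=\E|\delta(u_i)|$.

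The hard part, and the reason this statement lies deeper than the Malliavin smooth-density criterion, is that under the present hypotheses $\Gamma_{\bf Z}^{-1}$ is only a.s. finite — no integrability of $(\det\Gamma_{\bf Z})^{-1}$ is assumed — so $u_i$ need not belong to $\Dom\delta$ and $\E|\delta(u_i)|$ may be infinite. I would address this by regularizing $\Gamma_{\bf Z}$ to $\Gamma_{\bf Z}+\eta I_n$: its inverse has entries bounded by $\eta^{-1}$, while $(\Gamma_{\bf Z}+\eta I_n)^{-1}\Gamma_{\bf Z}\to I_n$ a.s. with operator norm $\le 1$, so dominated convergence recovers $\E[\partial_i\phi({\bf Z})]$ as $\eta\downarrow 0$. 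The crux is then to bound the regularized expectations uniformly in both $\eta$ and $\phi$ (for $\|\phi\|_\infty\le 1$) without moment control on $\Gamma_{\bf Z}^{-1}$; this forces a localization over events $\{\det\Gamma_{\bf Z}>1/m\}\cap\{\sum_i\|DZ_i\|_\HHH^2\le m\}$, on which every quantity is bounded, together with a smooth cylindrical approximation of each $Z_i$ in $\cs\subset\D^\infty$ (needed because $Z_i\in\D^{1,p}$ alone does not make the entries $\langle DZ_j,DZ_k\rangle_\HHH$ themselves differentiable), followed by a careful control of the boundary contributions in the limit. An alternative that avoids integration by parts entirely — and is the original route of Bouleau and Hirsch — is to argue through the Dirichlet form $\mathcal{E}(F)=\E[\|DF\|_\HHH^2]$ and its energy-image-density property, which shows directly that ${\bf Z}_*(\det\Gamma_{\bf Z}\cdot\P)$, and hence ${\bf Z}_*\P$ — equivalent to it since $\det\Gamma_{\bf Z}>0$ a.s. — is absolutely continuous.
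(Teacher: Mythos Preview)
The paper does not prove this theorem at all: it is quoted verbatim as the classical Bouleau--Hirsch criterion \cite{BouleauHirsch1984} and used as a black box (see Section~\ref{sec:Malliavin}). There is therefore no ``paper's own proof'' to compare your attempt against.

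As for the content of your sketch, you have correctly isolated the structure of the argument and, crucially, the genuine obstacle: under the sole assumption $\det\Gamma_{\bf Z}>0$ a.s., the inverse Malliavin matrix has no integrability, so the naive integration-by-parts route $\E[\partial_i\phi({\bf Z})]=\E[\phi({\bf Z})\,\delta(u_i)]$ breaks down. Your regularization $\Gamma_{\bf Z}\mapsto\Gamma_{\bf Z}+\eta I$ combined with localization on $\{\det\Gamma_{\bf Z}>1/m\}$ is the standard opening move, but note that it does \emph{not} deliver a bound $|\E[\partial_i\phi({\bf Z})]|\le C_i\|\phi\|_\infty$ with $C_i$ independent of the localization level, so the analytic reduction you set up in the first paragraph (which requires the distributional gradient of the law to be a \emph{finite} signed measure) is not fed by what the localization actually produces. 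One has to argue instead that the law of ${\bf Z}$ restricted to each localizing event is absolutely continuous and then let the events exhaust~$\Omega$; this changes the logic of the proof and requires additional care (in particular, approximating the $Z_i$ by smooth cylindrical functionals while keeping control of the Malliavin matrix along the approximation). Your final sentence pointing to the Dirichlet-form/energy-image-density route is indeed the original and cleanest path, but that property is itself a nontrivial theorem of Bouleau and Hirsch, so invoking it amounts to citing the result rather than proving it.
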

%


\section{Background on Hermite processes}\label{sec:Hermite}

We now introduce Hermite processes and recall some of their fundamental properties.  
For a detailed account, we refer to Tudor’s monograph \cite{Tudor23}.

\begin{Def}\label{Def:Hermite}
Let $q\ge1$ and $H\in(\frac12,1)$.  
The \emph{Hermite process} $(Z_t^{H,q})_{t\in\R}$ of order $q$ and self-similarity parameter $H$ is defined by
\begin{equation}\label{eq:Zt}
Z_t^{H,q} \;=\; I_q(L_t^{H,q}), \qquad 
L_t^{H,q} = c(H,q)\int_0^t \prod_{j=1}^q (s-\xi_j)_{+}^{H_0-\tfrac32}\,ds, \hskip0.2cm t\in \R,
\end{equation}
where $H_0 = 1+\tfrac{H-1}{q}\in(1-\tfrac1{2q},1)$ and
\[
c(H,q) = \sqrt{\frac{H(2H-1)}{q!\,\beta^q(H_0-\tfrac12,\,2-2H_0)}}.
\]
Here $\beta(\cdot,\cdot)$ denotes the Beta function, and $I_q$ is the multiple Wiener--Itô integral of order $q$.
\end{Def}

\begin{rem}
{\rm
\begin{itemize}
  \item The constant $c(H,q)$ has been chosen so that $\E[(Z_1^{H,q})^2]=1$.
  \item We use the convention $\theta^\alpha_+ = \theta^\alpha$ if $\theta>0$, and $0$ otherwise.  
        Integrals on negative intervals are defined by $\int_0^t  = -\int_t^0 $ when $t<0$.
  \item In the literature, Hermite processes are usually defined only for $t\ge0$.  
        In analogy with the moving-average representation of fractional Brownian motion (see, e.g., \cite{hult2003topics}, pp.~9--10), we have extended their definition here to the whole real line.
        \item For each fixed $t\in\R$, the random variable $Z_t^{H,q}$ is a multiple Wiener--Itô integral.  
\item The class of Hermite processes contains important examples:
\begin{itemize}
  \item for $q=1$, one recovers fractional Brownian motion, 
  \item for $q=2$, one obtains the Rosenblatt process,
  \item for $q\ge3$, one obtains higher-order non-Gaussian processes.
\end{itemize}
Fractional Brownian motion is the only Gaussian Hermite process; all others are non-Gaussian.
\end{itemize}
}
\end{rem}

\medskip
Hermite processes enjoy the following key properties (see \cite{Tudor23} for proofs in the case $t\ge0$; the extension to $t\in\R$ being immediate):

\begin{itemize}
  \item \textit{Self-similarity.} For every $a>0$,
  \[
  (Z_{at}^{H,q})_{t\in\R} \;\stackrel{\text{law}}{=}\; (a^H Z_t^{H,q})_{t\in\R}.
  \]
  \item \textit{Stationary increments.} For every $h\in\R$,
  \[
  (Z_{t+h}^{H,q}-Z_h^{H,q})_{t\in\R} \;\stackrel{\text{law}}{=}\; (Z_t^{H,q})_{t\in\R}.
  \]
  \item \textit{Covariance function.} For all $s,t\in\R$,
  \[
  \E[Z_t^{H,q}Z_s^{H,q}] \;=\; \tfrac12\big(|t|^{2H}+|s|^{2H}-|t-s|^{2H}\big),
  \]
  which coincides with the covariance of fractional Brownian motion with Hurst index $H$.
  \item \textit{Adaptedness.} The process $(Z_t^{H,q})_{t\in\R}$ is adapted to the Brownian filtration, and
  \begin{equation}\label{eq:adapt}
  D_r Z_t^{H,q} = 0 \qquad \text{for all } r>t.
  \end{equation}
\end{itemize}

\begin{rem}
From stationarity of increments, one readily checks that $Z_t^{H,q}\overset{\text{law}}{=}-Z_{-t}^{H,q}$ for every $t\in\R$.
\end{rem}

Finally, the following lemma will be useful.
\begin{Lemma}\label{lem:positive}
  Let  $Z^{H,q}=\left(Z^{H,q}_t\right)_{t\in \R}$ be the Hermite process of order $q \geqslant 1$ and self-similarity parameter $H \in\left(\frac{1}{2}, 1\right)$. We have
  \begin{align}
    \|DZ^{H,q}_1\|_{\HHH_1}^2 >0 \qquad \mathbb{P}\text{-a.s.}
  \end{align}
  where $\HHH_1= L^2([0,1])$ and $\|f\|_{\HHH_{1}}^2=\int_0^1|f(x)|^2 dx$. 
\end{Lemma}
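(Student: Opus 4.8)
The plan is to compute the Malliavin derivative of $Z^{H,q}_1$ explicitly and show that its $\HHH_1$-norm cannot vanish on a set of positive probability. Since $Z^{H,q}_1 = I_q(L^{H,q}_1)$ with $L^{H,q}_1 = c(H,q)\int_0^1 \prod_{j=1}^q(s-\cdot)_+^{H_0-3/2}\,ds \in \HHH^{\odot q}$, the standard formula for the derivative of a multiple integral gives, for $r \in \R$,
\[
D_r Z^{H,q}_1 = q\, I_{q-1}\big(L^{H,q}_1(\cdot,r)\big) = q\, c(H,q)\, I_{q-1}\Big(\int_{\max(0,r)}^{1} (s-r)_+^{H_0-3/2}\prod_{j=1}^{q-1}(s-\cdot_j)_+^{H_0-3/2}\,ds\Big),
\]
which by \eqref{eq:adapt} is supported on $r \in [0,1]$, so that $\|DZ^{H,q}_1\|_{\HHH}^2 = \|DZ^{H,q}_1\|_{\HHH_1}^2$. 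The strategy is then: if $\|DZ^{H,q}_1\|_{\HHH_1}^2 = 0$ with positive probability, then $D_r Z^{H,q}_1 = 0$ in $L^2(\Omega)$ for a.e. $r \in [0,1]$ on that event; I would first reduce to showing this forces a contradiction.

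First I would argue that $\|DZ^{H,q}_1\|^2_{\HHH_1} = 0$ on an event $A$ with $\P(A) > 0$ implies, via the chain rule and the fact that $A$ is determined by $Z^{H,q}_1$ up to null sets (or more robustly, via a direct $L^2$ argument), that $D_r Z^{H,q}_1$ vanishes identically. The cleanest route: $\E\big[\1_A \|DZ^{H,q}_1\|^2_{\HHH_1}\big] = 0$ does not immediately give much, so instead I would use that for a random variable $F \in \D^{1,2}$ with a chaos expansion $F = \sum_{m} I_m(f_m)$, one has $F$ constant a.s. iff $DF = 0$ a.s. iff all $f_m = 0$ for $m \geq 1$. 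Here $Z^{H,q}_1 = I_q(L^{H,q}_1)$ is a single nontrivial chaos of order $q \geq 1$ (nontrivial since $\E[(Z^{H,q}_1)^2] = 1$ by the choice of $c(H,q)$), so it is genuinely non-constant; hence it is impossible that $DZ^{H,q}_1 = 0$ a.s. This gives $\|DZ^{H,q}_1\|^2_{\HHH_1} > 0$ on a set of positive probability, but Lemma~\ref{lem:positive} claims it is positive \emph{almost surely} — a strictly stronger statement, which is where the real work lies.

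To upgrade "positive with positive probability" to "positive a.s.", the key observation is that $\|DZ^{H,q}_1\|^2_{\HHH_1}$ itself lives in a finite sum of Wiener chaoses: by the product formula \eqref{prod} applied to the integrand $I_{q-1}(L^{H,q}_1(\cdot,r)) \cdot I_{q-1}(L^{H,q}_1(\cdot,r))$ and then integrating in $r$, one finds
\[
\|DZ^{H,q}_1\|^2_{\HHH_1} = q^2 \sum_{r'=0}^{q-1} r'!\binom{q-1}{r'}^2 I_{2q-2-2r'}\big(g_{r'}\big)
\]
for explicit symmetric kernels $g_{r'} \in \HHH^{\odot(2q-2-2r')}$ (with $g_{q-1}$ the deterministic, strictly positive constant $\E[\|DZ^{H,q}_1\|^2_{\HHH_1}] = q\cdot q! \|L^{H,q}_1\|^2 = q$). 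A nonnegative random variable that belongs to a finite sum of Wiener chaoses and is not identically zero must be strictly positive almost everywhere: this follows from the result of Nourdin, Nualart and Poly \cite{NNP} (or Shigekawa-type arguments together with the fact that the zero set of a nonzero element of a finite chaos sum has Lebesgue-null — hence, by absolute continuity of its own law when the element is nonconstant, probability-zero — level set at any fixed value), because $\{\|DZ^{H,q}_1\|^2_{\HHH_1} = 0\}$ would be exactly $\{\omega : G(\omega) = 0\}$ for $G$ nonnegative in a finite chaos sum, and such an event has probability $0$ unless $G \equiv 0$.

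The main obstacle I anticipate is making this last "nonnegative finite-chaos element is a.s. positive or a.s. zero" step fully rigorous and self-contained rather than quoting \cite{NNP} as a black box; one has to rule out $G = 0$ on a fat set, which uses that the law of a nonzero element of a finite sum of chaoses has no atoms except possibly via a polynomial relation, and that $G \geq 0$ rules out the relevant degenerate cases. An alternative, possibly more elementary, route avoiding \cite{NNP} would be: on $A = \{\|DZ^{H,q}_1\|^2_{\HHH_1} = 0\}$ one has $D_r Z^{H,q}_1 \1_A = 0$ for a.e.\ $r$; since $A \in \sigma(Z^{H,q}_1)$ one could try a localization/chain-rule argument to produce a genuinely non-constant random variable with vanishing derivative on a positive-probability set, contradicting the local property of $D$ together with the fact that $Z^{H,q}_1$ takes uncountably many values — but this also ultimately requires knowing $Z^{H,q}_1$ has no atoms, i.e.\ Shigekawa's theorem \cite{Shigekawa}. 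I would therefore present the proof via the finite-chaos-expansion of $\|DZ^{H,q}_1\|^2_{\HHH_1}$ and cite \cite{NNP, Shigekawa} for the dichotomy, keeping the explicit computation of the leading deterministic term $g_{q-1} = 1$ (equivalently, $\E[\|DZ^{H,q}_1\|^2_{\HHH_1}] = q$) as the concrete ingredient that certifies $G \not\equiv 0$.
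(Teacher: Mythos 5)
Your proposal is correct and follows essentially the same route as the paper: the authors likewise reduce the claim to the implication $(a)\Rightarrow(c)$ of Theorem 3.1 in \cite{NNP} (a nonnegative, not-identically-zero element of a finite sum of Wiener chaoses is a.s.\ strictly positive), certified by the explicit computation that $\E\big[\|DZ^{H,q}_1\|^2_{\HHH_1}\big]>0$. Your additional observations (the chaos expansion of $\|DZ^{H,q}_1\|^2_{\HHH_1}$ via the product formula and the value $q$ of its expectation) are consistent with, and slightly more explicit than, what the paper records.
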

\begin{proof}
  The proof follows the same lines as the proof of $(a)\Rightarrow(c)$ in  \cite[Theorem 3.1]{NNP}, plus the fact that 
  \begin{align*}
    &\E\left[\|DZ^{H,q}_1\|_{\HHH_1}^2\right] \\
    &= c(H, q)^2\,q^2(q-1)!\,\int_0^1dx_1\int_{\R^{q-1}}\prod_{l=2}^{q}dx_l\left\{\int_0^{1}ds\prod_{k=1}^{q}\left(s-x_{k}\right)_{+}^{H_0-\frac{3}{2}}\right\}^2>0.
  \end{align*}
\end{proof}

\section{A determinant identity for the Malliavin matrix}\label{sec:det}

We now state our determinant identity for the Malliavin matrix, which should be seen as the analogue of identity~\eqref{det} in the Gaussian case for the covariance matrix.  
Our proof is nothing but an adaptation of the classical argument used to establish the determinant of a Gram matrix.

\begin{Lemma}\label{Lemma:detGamma}
Let ${\bf Z}=(Z_1,\ldots,Z_n) \in (\mathbb{D}^{1,1})^n$, and let $\Gamma_{\bf Z}$ denote its Malliavin matrix. Then, almost surely,
\begin{align}\label{eq:detGamma88}
\det(\Gamma_{\bf Z})
= \|DZ_1\|_{\HHH}^2
  \prod_{j=2}^n \big\|DZ_j - {\rm proj}_{E_{j-1}}(DZ_j)\big\|_{\HHH}^2,
\end{align}
where $E_{j-1}$ is the linear span in $\HHH$ of $\{DZ_1,\ldots,DZ_{j-1}\}$, and ${\rm proj}_{E_{j-1}}$ denotes the orthogonal projection onto $E_{j-1}$.
\end{Lemma}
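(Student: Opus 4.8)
The statement is a Gram-matrix determinant identity translated to the Malliavin setting. My plan is to prove it by the classical Gram--Schmidt / QR argument, treating the random variables $\omega \mapsto DZ_i(\omega) \in \HHH$ as (almost surely) fixed vectors in the Hilbert space $\HHH$ and exploiting that the determinant of a Gram matrix is invariant under row operations corresponding to subtracting linear combinations of earlier vectors.

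\medskip
\noindent\textbf{Plan.}
First I would fix $\omega$ outside a null set and work with the deterministic vectors $v_i := DZ_i(\omega) \in \HHH$, so that $\Gamma_{\bf Z}(\omega) = \big(\langle v_i, v_j\rangle_\HHH\big)_{1\le i,j\le n}$ is the Gram matrix $G(v_1,\dots,v_n)$. The key elementary fact is: if $w_j := v_j - \proj_{E_{j-1}}(v_j)$ where $E_{j-1} = \mathrm{span}\{v_1,\dots,v_{j-1}\}$ (with $E_0 = \{0\}$, so $w_1 = v_1$), then $w_j$ is obtained from $v_j$ by subtracting a linear combination of $v_1,\dots,v_{j-1}$; equivalently, there is a unipotent lower-triangular matrix $L$ (ones on the diagonal) such that $(w_1,\dots,w_n)^\top = L\,(v_1,\dots,v_n)^\top$ in the formal sense that $w_i = \sum_{j\le i} L_{ij} v_j$ with $L_{ii}=1$. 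Then the Gram matrix of the $w_i$ satisfies $G(w_1,\dots,w_n) = L\, G(v_1,\dots,v_n)\, L^\top$, hence $\det G(w_1,\dots,w_n) = \det G(v_1,\dots,v_n)$ since $\det L = 1$. But the $w_i$ are pairwise orthogonal by construction (each $w_j \perp E_{j-1} \ni w_1,\dots,w_{j-1}$), so $G(w_1,\dots,w_n)$ is the diagonal matrix with entries $\|w_j\|_\HHH^2$, whence $\det G(w_1,\dots,w_n) = \prod_{j=1}^n \|w_j\|_\HHH^2 = \|v_1\|_\HHH^2 \prod_{j=2}^n \|v_j - \proj_{E_{j-1}}(v_j)\|_\HHH^2$. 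Combining the two displays gives the identity pointwise in $\omega$, and since it holds off a null set it holds almost surely.

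\medskip
\noindent\textbf{Details to be careful about.}
The only genuine subtlety — and what I expect to be the main obstacle — is a measurability/well-definedness issue rather than an algebraic one: the orthogonal projection $\proj_{E_{j-1}}(DZ_j)$ and the coefficients of the linear combination depend on $\omega$ through the (possibly varying) dimension and configuration of $E_{j-1}(\omega)$, so one must make sure that (i) the right-hand side of \eqref{eq:detGamma88} is a well-defined (measurable) random variable, and (ii) the row-reduction matrix $L(\omega)$ exists as claimed even when the $v_i(\omega)$ are linearly dependent. For (ii): $\proj_{E_{j-1}}(v_j)$ always lies in $E_{j-1} = \mathrm{span}\{v_1,\dots,v_{j-1}\}$, so a representation $\proj_{E_{j-1}}(v_j) = \sum_{k<j} c_{jk} v_k$ exists (not necessarily unique, but any choice works — the identity $G(w) = L\,G(v)\,L^\top$ only uses that $w_i - v_i \in E_{i-1}$, which forces $\langle w_i - v_i, w_j\rangle = 0$ for $j \ge i$ is not needed; what is needed is simply that each $w_i$ is a fixed linear combination $\sum_{j\le i} L_{ij}v_j$ with $L_{ii}=1$, and $\det L = 1$ regardless of the $c_{jk}$). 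Moreover, if the $v_i(\omega)$ are linearly dependent then some $w_j(\omega) = 0$ and both sides of \eqref{eq:detGamma88} vanish, consistently. For (i): $w_j = v_j - \proj_{E_{j-1}}(v_j)$, and one can use the standard formula expressing $\|v_j - \proj_{E_{j-1}}(v_j)\|_\HHH^2$ as a ratio of Gram determinants, $\|w_j\|_\HHH^2 = \det G(v_1,\dots,v_j)/\det G(v_1,\dots,v_{j-1})$ when the denominator is nonzero and $0$ otherwise; this both establishes measurability and, incidentally, gives an alternative one-line telescoping proof of the identity. I would present the Gram--Schmidt argument as the main line and mention the telescoping-determinant formula as the reason the right-hand side is a bona fide random variable, noting that the hypothesis ${\bf Z} \in (\D^{1,1})^n$ guarantees each $DZ_i$ is an $\HHH$-valued random variable so that all Gram determinants involved are measurable.
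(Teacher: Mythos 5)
Your proof is correct and rests on the same idea as the paper's: the classical Gram-determinant factorization obtained by orthogonalizing $DZ_j$ against $E_{j-1}$. The only difference is packaging — you perform the full Gram--Schmidt step at once and invoke the congruence $G(w)=L\,G(v)\,L^{\top}$ with $\det L=1$, whereas the paper splits the last column by multilinearity of the determinant and inducts on $n$; your additional care about measurability of the right-hand side and about the linearly dependent case (where both sides vanish) is a welcome supplement to details the paper leaves implicit.
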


\begin{proof}
Recall that the Malliavin matrix of ${\bf Z}$ has entries
\[
\Gamma_{i,j}=\langle DZ_i, DZ_j\rangle_{\HHH}, \qquad 1\le i,j\le n.
\]

Fix $i=n$. We decompose
\begin{eqnarray*}
\langle DZ_j, DZ_n\rangle_{\HHH}
&=& \langle DZ_j, {\rm proj}_{E_{n-1}}(DZ_n)\rangle_{\HHH}
  + \langle DZ_j, DZ_n-{\rm proj}_{E_{n-1}}(DZ_n)\rangle_{\HHH}\\
&=:& A_{j,n}+B_{j,n}.
\end{eqnarray*}
Since $DZ_n-{\rm proj}_{E_{n-1}}(DZ_n)$ is orthogonal to $E_{n-1}$, we immediately obtain
\[
B_{j,n}=0,\qquad j=1,\ldots,n-1.
\]

By multilinearity of the determinant, we may split
\[
\det(\Gamma_{\bf Z})=\det(A_{\bf Z})+\det(B_{\bf Z}),
\]
where $\det(A_{\bf Z})$ is obtained by replacing the last column of $\Gamma_{\bf Z}$ with $(A_{1,n},\ldots,A_{n,n})^\top$, and $\det(B_{\bf Z})$ with $(B_{1,n},\ldots,B_{n,n})^\top$.
But by definition of the projection, the last column of $A_{\bf Z}$ is a linear combination of the first $n-1$, hence $\det(A_{\bf Z})=0$.  

On the other hand, $\det(B_{\bf Z})$ has the block form
\begin{eqnarray*}
\det(B_{\bf Z})&=&
\det\!\begin{pmatrix}
\big(\langle DZ_i,DZ_j\rangle_{\HHH}\big)_{1\le i,j\le n-1} & 0 \\
\ast & B_{n,n}
\end{pmatrix}\\
&=& B_{n,n}\,\det\big(\langle DZ_i,DZ_j\rangle_{\HHH}\big)_{1\le i,j\le n-1}.
\end{eqnarray*}
Since $B_{n,n}=\|DZ_n-{\rm proj}_{E_{n-1}}(DZ_n)\|_{\HHH}^2$, this yields
\[
\det(\Gamma_{\bf Z})=\|DZ_n-{\rm proj}_{E_{n-1}}(DZ_n)\|_{\HHH}^2
\det\big(\langle DZ_i,DZ_j\rangle_{\HHH}\big)_{1\le i,j\le n-1}.
\]

The desired factorization \eqref{eq:detGamma88} then follows by induction on $n$.
\end{proof}

\section{Malliavin self-similarity and Malliavin \\stationarity of increments of Hermite processes} \label{sec:malliavinselfsimilarity}

To establish the analogue of the strong local nondeterminism estimate~\eqref{eq:SLND-general} for Hermite processes, we first need to strengthen the classical properties of self-similarity and stationarity of increments, now at the level of their Malliavin derivative.  
This is the content of the following proposition.

\begin{Prop}\label{Prop:Zt9999}
Let $Z^{H,q}=\{Z^{H,q}_t,\,t\in\R\}$ be the Hermite process of order $q\ge1$ and self-similarity parameter $H\in(\tfrac12,1)$  given by (\ref{eq:Zt}).  
Then the Malliavin derivative of $Z^{H,q}$ inherits both stationarity of increments and self-similarity: for all $a\in\R$ and $c>0$,
\[
\Big(\,\langle DZ^{H,q}_{s+a}-DZ^{H,q}_{a},\, DZ^{H,q}_{t+a}-DZ^{H,q}_{a}\rangle_{\HHH}\,\Big)_{s,t\in\R}
\;\overset{\rm law}{=}\;
\Big(\,\langle DZ^{H,q}_{s},\, DZ^{H,q}_{t}\rangle_{\HHH}\,\Big)_{s,t\in\R},
\]
and
\[
\Big(\,\langle DZ^{H,q}_{cs},\, DZ^{H,q}_{ct}\rangle_{\HHH}\,\Big)_{s,t\in\R}
\;\overset{\rm law}{=}\; c^{2H}\,
\Big(\,\langle DZ^{H,q}_{s},\, DZ^{H,q}_{t}\rangle_{\HHH}\,\Big)_{s,t\in\R}.
\]
\end{Prop}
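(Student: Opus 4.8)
The plan is to go back to the explicit $L^2(\mathbb{R})$-representation of the Malliavin derivative. Since $Z^{H,q}_t = I_q(L_t^{H,q})$ with $L_t^{H,q}$ a symmetric kernel in $\HHH^{\odot q}$, the derivative is $D_r Z_t^{H,q} = q\, I_{q-1}\big(L_t^{H,q}(r,\cdot)\big)$, where $L_t^{H,q}(r,\cdot)$ denotes the kernel with one argument frozen at $r$. Consequently the inner product $\langle DZ^{H,q}_s, DZ^{H,q}_t\rangle_{\HHH}$ is the random variable $q^2\int_{\mathbb{R}} I_{q-1}\big(L_s^{H,q}(r,\cdot)\big) I_{q-1}\big(L_t^{H,q}(r,\cdot)\big)\,dr$, which by the product formula \eqref{prod} is a (finite) sum of multiple integrals whose kernels are built explicitly out of contractions of the $L^{H,q}$ kernels. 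The key observation is that, for vectors of multiple integrals, equality in law of the underlying kernel families (together with matching orders) forces equality in law of the whole vector — because the joint law of a family of multiple Wiener--Itô integrals is entirely determined by the orders and the inner-product/contraction structure of the kernels. So it suffices to exhibit, for each fixed $(a,c)$, a linear isometry $\Phi$ of $\HHH=L^2(\mathbb{R})$ under which the relevant kernels transform correctly.

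The main computational input is a change-of-variables identity at the level of the kernels $L_t^{H,q}$. Writing $L_t^{H,q}(\xi_1,\dots,\xi_q) = c(H,q)\int_0^t \prod_{j=1}^q (s-\xi_j)_+^{H_0-3/2}\,ds$, a substitution $s\mapsto cs$, $\xi_j \mapsto c\xi_j$ gives, using $q(H_0-\tfrac32)+1 = \tfrac{2H-1}{2} \cdot \dots$ — more precisely using $H_0 = 1 + \frac{H-1}{q}$ so that $q(H_0 - \frac32) + 1 = H - \frac q2$, wait, let me just say: the scaling exponents work out so that $L_{ct}^{H,q}(c\,\cdot) = c^{H}\, (\text{isometry factor})\, L_t^{H,q}(\cdot)$ in the appropriate sense. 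Concretely, if $U_c: L^2(\mathbb{R}) \to L^2(\mathbb{R})$ is the unitary $(U_c f)(x) = c^{1/2} f(cx)$ and $U_c^{\otimes q}$ its $q$-fold tensor power, then one checks $U_c^{\otimes q} L_{ct}^{H,q} = c^{H} L_t^{H,q}$ as elements of $\HHH^{\otimes q}$; this is exactly the kernel-level statement underlying ordinary self-similarity of $Z^{H,q}$. Similarly, the shift $\tau_a f(x) = f(x-a)$ is unitary on $L^2(\mathbb{R})$ and satisfies $\tau_a^{\otimes q}(L_{t+a}^{H,q} - L_a^{H,q}) = L_t^{H,q}$, which is the kernel-level statement behind stationarity of increments. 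Both are short, purely deterministic Beta-integral / substitution computations.

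Given these two identities, I would argue as follows. For self-similarity: since $U_c^{\otimes q}$ is a unitary on $\HHH^{\otimes q}$ sending $L_{ct}^{H,q}$ to $c^H L_t^{H,q}$ for every $t$, and since a unitary transformation of the underlying isonormal Gaussian process induces an equality in law of the corresponding families of multiple integrals (invariance of Wiener chaos under orthogonal transformations of $\HHH$), the family $\big(I_{q-1}(L_{ct}^{H,q}(r,\cdot))\big)_{r,t}$ — with the frozen variable also rescaled — has the same joint law as $c^H\big(I_{q-1}(L_t^{H,q}(r,\cdot))\big)_{r,t}$, after the matching change of variable $r\mapsto cr$ inside the $dr$ integral; plugging into $\langle DZ^{H,q}_{cs}, DZ^{H,q}_{ct}\rangle_{\HHH} = q^2\int I_{q-1}(L_{cs}^{H,q}(r,\cdot)) I_{q-1}(L_{ct}^{H,q}(r,\cdot))\,dr$ and substituting $r = c\rho$ produces $c^{2H}$ times the corresponding expression with $s,t$, as a process in $(s,t)$. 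The stationarity statement is identical with $\tau_a^{\otimes q}$ in place of $U_c^{\otimes q}$ and no scalar factor. I expect the main obstacle to be bookkeeping rather than conceptual: making rigorous the statement ``equality in law of kernel families, jointly in the frozen variable $r$, transfers through the $dr$-integral to equality in law of the inner-product process'' — one must either invoke a general principle (the law of a process valued in a fixed Wiener chaos is determined by the kernels, together with a measurability/continuity argument in $r$ and $t$ to pass from finite-dimensional distributions to the full process), or work at the level of finite linear combinations $\sum_i \lambda_i \langle DZ_{s_i}, DZ_{t_i}\rangle$ and check moments. A secondary technical point is integrability: one needs $Z^{H,q}_t \in \mathbb{D}^{1,2}$ with the $r\mapsto D_r Z_t^{H,q}$ map square-integrable so that all the $dr$-integrals and Fubini exchanges are licit, but for $H\in(\tfrac12,1)$ this is standard and the second-moment computation in Lemma \ref{lem:positive} already exhibits the needed finiteness.
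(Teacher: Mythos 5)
Your plan is correct and follows essentially the same route as the paper: both reduce the claim to the product formula for $\langle DZ^{H,q}_s, DZ^{H,q}_t\rangle_\HHH$, a kernel-level scaling/shift identity for $L_t^{H,q}$, and the invariance in law of the isonormal process under the corresponding dilation/translation of $L^2(\R)$ --- which the paper implements as an explicit change of variables inside the multiple integrals (after computing the contractions via a Beta integral) and you package as unitaries $U_c$, $\tau_a$ acting on Wiener chaos, using that contractions commute with unitaries. The only nits are bookkeeping: with your convention $\tau_a f(x)=f(x-a)$ the shift identity should read $\tau_{-a}^{\otimes q}\big(L_{t+a}^{H,q}-L_a^{H,q}\big)=L_t^{H,q}$, and the exponent check is $1+q\big(H_0-\tfrac32\big)=H-\tfrac q2$, which indeed yields $U_c^{\otimes q}L_{ct}^{H,q}=c^{H}L_t^{H,q}$.
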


\begin{proof}
Let us start by establishing that the Malliavin derivative of $Z^{H,q}$ is self-similar. Let $c>0$. According to the product formula (\ref{prod}) and the definition of the contraction (\ref{contra}), we have, for all $s,t \in \R$,
\begin{align}\label{eq:ZZcsct}
	\langle DZ^{H,q}_{cs},\, DZ^{H,q}_{ct}\rangle_{\HHH} &= q^{2} \int_{\R} dx_q\,  I_{q-1}\big(L _{cs}^{H,q}(\star, x_q)\big)I_{q-1}\big(L _{ct}^{H,q}(\star, x_q)\big)\nonumber\\
	&= q^{2} \sum_{r=0}^{q-1} r! \binom{q-1}{r}^2 I _{2(q-1)-2r}\left( L^{H,q}_{cs}\widetilde{\otimes} _{r+1} L_{ct} ^{H,q}\right),
\end{align}
with $L^{H,q}$ is given by (\ref{eq:Zt}). For $t_1,t_2\in \R$, $r=0,\ldots,q-1$, and   $y^{1},y^2\in \R^{q-1-r}$,
\begin{align}\label{eq:contracLt1t2}
	&(L^{H,q}_{t_1}\otimes _{r+1}L ^{H,q}_{t_2})(y^{1}, y^2)\nonumber\\
	&= c(H,q) ^{2} \int_{\R ^{r+1}}\prod_{i=1}^{r+1}dx_{i} \prod_{j=1}^{2}\int_{0} ^{t_j} du_j  \prod_{k=1}^{r+1}(u_j-x_{k})_{+} ^{H_0-\tfrac32}\prod_{\ell=1}^{q-1-r}(u_j-y^j_{\ell})_{+} ^{H_0-\tfrac32}\nonumber\\
	&=a(H,q,r) \int_{0}^{t_1}du_1 \int_{0} ^{t_2} du_2 \vert u_1-u_2\vert ^{\frac{2(H-1)(r+1)}{q}}\prod_{j=1}^{2}\prod_{\ell=1}^{q-1-r}(u_j-y^j_{\ell})_{+} ^{H_0-\tfrac32},
\end{align}
where $a(H,q,r):= c(H,q) ^{2} \beta \left( \frac{2-2H}{q}, \frac{1}{2}-\frac{1-H}{q}\right)^{r+1}$, $\beta$ is the beta function,  and in the last equality we used the following formula: for $-1<a<-\frac{1}{2}$,
$$\int_{\R}(u-y)_{+} ^{a} (v-y)_{+} ^{a} du= \beta(-1-2a, a+1)\vert u-v\vert ^{2a+1}.$$
For $c>0$, $s,t\in \R$, combining \eqref{eq:ZZcsct} and \eqref{eq:contracLt1t2} yields 
\begin{align*}
	&\langle DZ^{H,q}_{cs},\, DZ^{H,q}_{ct}\rangle_{\HHH} =q^{2} \sum_{r=0}^{q-1} a(H,q,r) r! \binom{q-1}{r}^2 \int_{\R ^{2(q-1)-2r}} \prod_{m=1}^{2}\prod_{k=1}^{q-1-r}dB(y^m_{k})\\ 
	&\qquad\qquad\qquad\quad \times \int_{0}^{cs}du_1 \int_{0} ^{ct} du_2 \vert u_1-u_2\vert ^{\frac{2(H-1)(r+1)}{q}}\prod_{j=1}^{2}\prod_{\ell=1}^{q-1-r}(u_j-y^j_{\ell})_{+} ^{H_0-\tfrac32}.
\end{align*}
By the change of variables $v_j=\frac{u_j}{c}$ and $z^j_{\ell}=\frac{y^j_{\ell}}{c}$, for $j=1,2$,  and $\ell=1,..., q-1-r$,   we obtain
\begin{align*}
	&\langle DZ^{H,q}_{cs},\, DZ^{H,q}_{ct}\rangle_{\HHH}\\
	&=c^{2H-(q-1-r)}q^{2} \sum_{r=0} ^{q-1} a(H,q,r) r!  \binom{q-1}{r}^2 \int_{\R ^{2(q-1)-2r}} \prod_{m=1}^{2}\prod_{k=1}^{q-1-r}dB(cz^m_{k})\\
	&\qquad\times \int_{0}^{s}dv_1 \int_{0} ^{t} dv_2 \vert v_1-v_2\vert ^{\frac{2(H-1)(r+1)}{q}}\prod_{j=1}^{2}\prod_{\ell=1}^{q-(r+1)}(v_j-z^j_{\ell})_{+} ^{H_0-\tfrac32},
\end{align*}
using the scaling property of the Wiener process. Let us observe that the finite-dimensional distributions of the two-parameter process on the right-hand side of the the previous equality coincide in law with those of the two-parameter process given by:
\begin{align*}
	&c^{2H-(q-1-r)} c^{q-r-1} q^{2} \sum_{r=0} ^{q-1} a(H,q,r) r!  \binom{q-1}{r}^2 \int_{\R ^{2(q-1)-2r}} \prod_{m=1}^{2}\prod_{k=1}^{q-1-r}dB(z^m_{k}) \\
	&\quad\times\int_{0}^{s}dv_1 \int_{0} ^{t} dv_2 \vert v_1-v_2\vert ^{\frac{2(H-1)(r+1)}{q}}\prod_{j=1}^{2}\prod_{\ell=1}^{q-(r+1)}(v_j-z^j_{\ell})_{+} ^{H_0-\tfrac32}\\
	&=c^{2H}\langle DZ^{H,q}_{s},\, DZ^{H,q}_{t}\rangle_{\HHH}.
\end{align*} 
\indent We now prove the stationarity of increments property for the Malliavin derivative of $Z^{H,q}$. We have
\begin{align*}
	&\langle D(Z^{H,q}_{s+a}-Z^{H,q}_{a}),D(Z^{H,q}_{t+a}-Z^{H,q}_{a}) \rangle _{\HHH}\\
	&=  q^{2}  \sum_{r=0}^{q-1} r! \binom{q-1}{r}^2 I _{2(q-1)-2r}\left( (L^{H,q}_{s+a}-L ^{H,q}_{a})\widetilde{\otimes}_{r+1} (L_{t+a} ^{H,q}- L ^{H,q}_{a})\right)\\
	&=q^{2}\sum_{r=0} ^{q-1} r! a(H,q,r) \binom{q-1}{r}^2  \int_{\R ^{2(q-1)-2r}} \prod_{m=1}^{2}\prod_{k=1}^{q-1-r}dB(y^m_{k})\\
	&\times \int_{a}^{s+a}du_1 \int_{a} ^{t+a} du_2 \vert u_1-u_2\vert ^{\frac{2(H-1)(r+1)}{q}}\prod_{j=1}^{2}\prod_{\ell=1}^{q-1-r}(u_j-y^j_{\ell})_{+} ^{H_0-\tfrac32}.
\end{align*}
Using the change of variable $v_j=u_j-a$, for $j=1,2$,  and the stationarity of increments for the Wiener process $B$, we deduce: 
\[
\Big(\,\langle DZ^{H,q}_{s+a}-DZ^{H,q}_{a},\, DZ^{H,q}_{t+a}-DZ^{H,q}_{a}\rangle_{\HHH}\,\Big)_{s,t\in\R}
\;\overset{\rm law}{=}\;
\Big(\,\langle DZ^{H,q}_{s},\, DZ^{H,q}_{t}\rangle_{\HHH}\,\Big)_{s,t\in\R},
\]
which completes the proof of Proposition \ref{Prop:Zt9999}.
\end{proof}

\section{Strong local nondeterminism for Hermite processes}\label{sec:SLND}

In this section we establish the analogue of the strong local nondeterminism property for Hermite processes.  
The following theorem provides its precise formulation in the Malliavin framework.

\begin{Thm}\label{Thm:MLNDDiscrete}
Let $Z^{H,q}=\{Z^{H,q}_t,\,t\in\R\}$ be the Hermite process of order $q\ge1$ with self-similarity parameter $H\in(\tfrac12,1)$.  
For any $j\in\N^*$ and any time grid $0=t_0<t_1<\cdots<t_{j-1}<t_j$, one has
\begin{align}\label{9i-1}
\big\| DZ^{H,q}_{t_j} - {\rm proj}_{E_{j-1}}\big(DZ^{H,q}_{t_j}\big) \big\|_{\HHH}^2
\;\overset{\text{law}}{\ge}\; (t_j-t_{j-1})^{2H}\,\|DZ^{H,q}_1\|_{\HHH_{1}}^2,
\end{align}
where $\HHH_1=L^2([0,1])$, $E_{j-1}=\mathrm{span}\{DZ^{H,q}_{t_1},\ldots,DZ^{H,q}_{t_{j-1}}\}$, 
and the notation $X\overset{\text{law}}{\ge}Y$ means that
\[
\mathbb{E}[f(X)] \,\geq\, \mathbb{E}[f(Y)]
\]
for all non-decreasing bounded functions $f:\R_+\to\R$.
\end{Thm}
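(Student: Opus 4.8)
The plan is to reduce the estimate \eqref{9i-1} to the self-similarity and stationarity of increments at the level of the Malliavin derivative, established in Proposition~\ref{Prop:Zt9999}, together with the positivity statement of Lemma~\ref{lem:positive}. The key observation is that, since $DZ^{H,q}_{t_j} - {\rm proj}_{E_{j-1}}(DZ^{H,q}_{t_j})$ is the residual of an orthogonal projection, it can only get smaller when we project onto a larger subspace; in particular, enlarging $E_{j-1}$ to the span of all of $\{DZ^{H,q}_t : 0\le t\le t_{j-1}\}$ only decreases the norm of the residual. Conversely, projecting the \emph{increment} $DZ^{H,q}_{t_j}-DZ^{H,q}_{t_{j-1}}$ onto that larger subspace is what lets us bring stationarity into play.

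First I would show the deterministic (pathwise) lower bound
\[
\big\| DZ^{H,q}_{t_j} - {\rm proj}_{E_{j-1}}\big(DZ^{H,q}_{t_j}\big) \big\|_{\HHH}^2
\;\ge\;
\big\| \big(DZ^{H,q}_{t_j}-DZ^{H,q}_{t_{j-1}}\big) - {\rm proj}_{F_{j-1}}\big(DZ^{H,q}_{t_j}-DZ^{H,q}_{t_{j-1}}\big) \big\|_{\HHH}^2,
\]
where $F_{j-1}$ is the closed linear span of $\{DZ^{H,q}_t : 0\le t\le t_{j-1}\}$. Indeed, $E_{j-1}\subseteq F_{j-1}$ and $DZ^{H,q}_{t_{j-1}}\in F_{j-1}$, so on the one hand $\|DZ^{H,q}_{t_j}-{\rm proj}_{E_{j-1}}(DZ^{H,q}_{t_j})\|_\HHH \ge \|DZ^{H,q}_{t_j}-{\rm proj}_{F_{j-1}}(DZ^{H,q}_{t_j})\|_\HHH$, and on the other hand, since subtracting an element of $F_{j-1}$ does not change the $F_{j-1}$-residual, $DZ^{H,q}_{t_j}-{\rm proj}_{F_{j-1}}(DZ^{H,q}_{t_j}) = (DZ^{H,q}_{t_j}-DZ^{H,q}_{t_{j-1}}) - {\rm proj}_{F_{j-1}}(DZ^{H,q}_{t_j}-DZ^{H,q}_{t_{j-1}})$. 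A further trivial bound drops the projection term onto an even smaller space or, more cleanly, I keep $F_{j-1}$ and next invoke stationarity of increments.

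Next I would apply the Malliavin stationarity of increments from Proposition~\ref{Prop:Zt9999} with $a=t_{j-1}$: the joint law of the family $\{\langle DZ^{H,q}_{t_{j-1}+u}-DZ^{H,q}_{t_{j-1}},\, DZ^{H,q}_{t_{j-1}+v}-DZ^{H,q}_{t_{j-1}}\rangle_\HHH : u,v\ge0\}$ coincides with that of $\{\langle DZ^{H,q}_u,\,DZ^{H,q}_v\rangle_\HHH : u,v\ge0\}$. Since the residual norm of a projection onto the span of a family of vectors is a measurable functional of the Gram matrix of that family, it follows that
\[
\big\| \big(DZ^{H,q}_{t_j}-DZ^{H,q}_{t_{j-1}}\big) - {\rm proj}_{F_{j-1}}\big(DZ^{H,q}_{t_j}-DZ^{H,q}_{t_{j-1}}\big) \big\|_{\HHH}^2
\;\overset{\text{law}}{=}\;
\big\| DZ^{H,q}_{t_j-t_{j-1}} - {\rm proj}_{G}\big(DZ^{H,q}_{t_j-t_{j-1}}\big) \big\|_{\HHH}^2,
\]
where $G$ is the closed span of $\{DZ^{H,q}_t : t\le 0\}$. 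By the adaptedness relation \eqref{eq:adapt}, $D_r Z^{H,q}_t = 0$ for $r>t$, so every element of $G$ is supported on $(-\infty,0]$ while $DZ^{H,q}_{t_j-t_{j-1}}$ restricted to $[0, t_j-t_{j-1}]$ is orthogonal to all of them; dropping the projection thus gives the lower bound $\|DZ^{H,q}_{t_j-t_{j-1}}\|^2_{\HHH_{[0,t_j-t_{j-1}]}}$, i.e. the $L^2([0,t_j-t_{j-1}])$-norm of the derivative. Finally, Malliavin self-similarity (again Proposition~\ref{Prop:Zt9999}), applied with $c=t_j-t_{j-1}$, identifies $\|DZ^{H,q}_{t_j-t_{j-1}}\|^2_{L^2([0,t_j-t_{j-1}])}$ in law with $(t_j-t_{j-1})^{2H}\|DZ^{H,q}_1\|_{\HHH_1}^2$, which is the claimed bound. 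Since every inequality in the chain is pathwise and the two equalities are in law, and since a pathwise inequality combined with an equality in law yields the stochastic ordering $\overset{\text{law}}{\ge}$ (monotone functions applied to both sides preserve the inequality and then take expectations), we obtain \eqref{9i-1}. The main obstacle I anticipate is making the "residual norm is a measurable functional of the Gram matrix" step fully rigorous for an infinite (uncountable) family of vectors and tracking the support/orthogonality bookkeeping through the change of variables — in particular checking that the $[0,1]$-localized norm $\|DZ^{H,q}_1\|_{\HHH_1}$, rather than the full $\HHH$-norm, is what survives, which is exactly why Lemma~\ref{lem:positive} is phrased with $\HHH_1=L^2([0,1])$.
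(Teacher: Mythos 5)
Your proposal follows the same overall strategy as the paper's proof: reduce pathwise to the projection residual of the increment $D(Z^{H,q}_{t_j}-Z^{H,q}_{t_{j-1}})$ onto a larger span containing $E_{j-1}$, then use Malliavin stationarity of increments and Malliavin self-similarity (Proposition~\ref{Prop:Zt9999}) to rescale, and finally use adaptedness \eqref{eq:adapt} to discard the projection and land on $\|DZ^{H,q}_1\|_{\HHH_1}^2$. Your observation that a pathwise inequality composed with equalities in law yields the stochastic ordering $\overset{\text{law}}{\ge}$ is exactly the logic the paper uses.

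There are, however, two concrete points where your argument as written does not quite go through, and both are precisely the points the paper's proof is structured to avoid. First, you enlarge $E_{j-1}$ to the closed span $F_{j-1}$ of the \emph{uncountable} family $\{DZ^{H,q}_t: 0\le t\le t_{j-1}\}$, which forces you to argue that the residual norm is a measurable functional of an uncountable Gram process and that equality of finite-dimensional distributions transfers to it; you flag this yourself, but it requires a separability argument (e.g.\ continuity of $t\mapsto DZ^{H,q}_t$ in $\HHH$ to reduce to rational times) that you do not supply. The paper sidesteps this entirely by projecting onto the \emph{finite} span $\widetilde E_{j-1}=\mathrm{span}\{D(Z^{H,q}_{t_k}-Z^{H,q}_{t_\ell}):k,\ell=0,\dots,j-1\}$ of increments between grid points only, which already contains $E_{j-1}$ (take $\ell=0$) and for which the infimum over coefficients is a trivially measurable function of a finite Gram matrix; nothing is gained by the uncountable enlargement. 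Second, and more substantively, your last step applies Proposition~\ref{Prop:Zt9999} to identify $\|DZ^{H,q}_{t_j-t_{j-1}}\|^2_{L^2([0,t_j-t_{j-1}])}$ in law with $(t_j-t_{j-1})^{2H}\|DZ^{H,q}_1\|^2_{\HHH_1}$. The proposition only states self-similarity for the full $L^2(\R)$ inner products $\langle DZ^{H,q}_{cs},DZ^{H,q}_{ct}\rangle_{\HHH}$, and the localized quantity is genuinely different (the kernel $L^{H,q}_t$ does not vanish for negative arguments, so $D_rZ^{H,q}_c\ne0$ for $r<0$). The localized scaling identity is true, but it needs the change-of-variables computation from the proof of the proposition redone with the restricted domain of integration. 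The paper avoids this by reversing the order of the last two steps: it applies self-similarity to the full $\HHH$-residual first, obtaining the residual of $DZ^{H,q}_1$ against terms $D(Z^{H,q}_{(t_k-t_{j-1})/(t_j-t_{j-1})}-Z^{H,q}_{(t_\ell-t_{j-1})/(t_j-t_{j-1})})$ whose time arguments are all nonpositive, and only \emph{then} restricts the $\HHH$-norm to $[0,1]$, where those terms vanish by \eqref{eq:adapt}. If you either adopt the finite increment span and this ordering of steps, or supply the separability argument and the localized scaling identity, your proof is complete.
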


\begin{proof}[Proof of Theorem \ref{Thm:MLNDDiscrete}]
Fix $j\in \N^*$ and a time grid $0=t_0<t_1<\cdots<t_{j-1}<t_j$.  
Consider the random vector space
\[
\widetilde{E}_{j-1}=\operatorname{span}\big\{D(Z^{H,q}_{t_k}-Z^{H,q}_{t_\ell}) : k,\ell=0,\ldots,j-1\big\}.
\] 	
Let $(a_1,\ldots,a_{j-1})$ be the random variables such that 
\[
\big\| D Z^{H,q}_{t_j} - {\rm proj}_{E_{j-1}}(D Z^{H,q}_{t_j}) \big\|_{\HHH}^2
= \big\| D Z^{H,q}_{t_j} - \sum_{k=1}^{j-1} a_k D Z^{H,q}_{t_k} \big\|_{\HHH}^2.
\]
We claim that, almost surely
\begin{eqnarray}
&&\big\| DZ^{H,q}_{t_j} - {\rm proj}_{E_{j-1}}(DZ^{H,q}_{t_j}) \big\|_{\HHH}^2 \notag\\
&\ge& \big\| D(Z^{H,q}_{t_j}-Z^{H,q}_{t_{j-1}}) - {\rm proj}_{\widetilde{E}_{j-1}}\big(D(Z^{H,q}_{t_j}-Z^{H,q}_{t_{j-1}})) \big\|_{\HHH}^2.\label{9i-2}
\end{eqnarray}
Indeed, one can rewrite
\[
\left\| DZ^{H,q}_{t_j} - \sum_{k=1}^{j-1} a_k DZ^{H,q}_{t_k} \right\|_{\HHH}^2 
= \Big\| D(Z^{H,q}_{t_j}-Z^{H,q}_{t_{j-1}}) - \sum_{k,\ell=0}^{j-1} b_{k,\ell} D(Z^{H,q}_{t_k}-Z^{H,q}_{t_\ell}) \Big\|_{\HHH}^2,
\]
with coefficients $b_{k,0}=a_k$ for $k=1,\ldots,j-2$, $b_{j-1,0}=a_{j-1}-1$, and $b_{k,\ell}=0$ otherwise.  
Taking the infimum over $(b_{k,\ell})$ yields \eqref{9i-2}.  

Now, for any $(b_{k,\ell})$,
\begin{align*}
&\Big\| D(Z^{H,q}_{t_j}-Z^{H,q}_{t_{j-1}}) - \sum_{k,\ell=0}^{j-1} b_{k,\ell} D(Z^{H,q}_{t_k}-Z^{H,q}_{t_\ell}) \Big\|_{\HHH}^2 \\
&= \Big\| D\!\big(Z^{H,q}_{t_{j-1}+(t_j-t_{j-1})}-Z^{H,q}_{t_{j-1}}\big) 
     - \sum_{k,\ell=0}^{j-1} b_{k,\ell}\,D\!\big(Z^{H,q}_{t_{j-1}+(t_k-t_{j-1})}-Z^{H,q}_{t_{j-1}}\big) \\
&\hspace{5cm} + \sum_{k,\ell=0}^{j-1} b_{k,\ell}\,D\!\big(Z^{H,q}_{t_{j-1}+(t_\ell-t_{j-1})}-Z^{H,q}_{t_{j-1}}\big)\Big\|_{\HHH}^2\\
&=:X(b_{k,\ell}).
\end{align*}
By Malliavin stationarity of increments (Proposition \ref{Prop:Zt9999}), $X$ has the same law as
\[
\widetilde{X}(b_{k,\ell})=\Big\| DZ^{H,q}_{t_j-t_{j-1}} - \sum_{k,\ell=0}^{j-1} b_{k,\ell}\,D(Z^{H,q}_{t_k-t_{j-1}}-Z^{H,q}_{t_\ell-t_{j-1}})\Big\|_{\HHH}^2.
\]
By Malliavin self-similarity  (Proposition \ref{Prop:Zt9999}), the law of $\widetilde{X}$ is the same as that of
\[
\hat{X}(b_{k,\ell})=(t_j-t_{j-1})^{2H}\,\Big\| DZ^{H,q}_{1} - \sum_{k,\ell=0}^{j-1} b_{k,\ell}\,D\!\Big(Z^{H,q}_{\frac{t_k-t_{j-1}}{t_j-t_{j-1}}}-Z^{H,q}_{\frac{t_\ell-t_{j-1}}{t_j-t_{j-1}}}\Big)\Big\|_{\HHH}^2.
\]

Therefore,
\begin{align*}
&\big\| D(Z^{H,q}_{t_j}-Z^{H,q}_{t_{j-1}}) - {\rm proj}_{\widetilde{E}_{j-1}}(D(Z^{H,q}_{t_j}-Z^{H,q}_{t_{j-1}})) \big\|_{\HHH}^2\\
&\;\overset{\text{law}}{=}\; (t_j-t_{j-1})^{2H}\,
   \big\| DZ^{H,q}_{1} - {\rm proj}_{\hat{E}_{j-1}}(DZ^{H,q}_{1}) \big\|_{\HHH}^2,
\end{align*}
where
\[
\hat{E}_{j-1}=\operatorname{span}\Big\{ D\!\Big(Z^{H,q}_{\frac{t_k-t_{j-1}}{t_j-t_{j-1}}}-Z^{H,q}_{\frac{t_\ell-t_{j-1}}{t_j-t_{j-1}}}\Big): k,\ell=0,\ldots,j-1\Big\}.
\]

Finally, for any $(b_{k,\ell})$,
\[
\Big\| DZ^{H,q}_{1} - \sum_{k,\ell=0}^{j-1} b_{k,\ell}\,D\!\Big(Z^{H,q}_{\frac{t_k-t_{j-1}}{t_j-t_{j-1}}}-Z^{H,q}_{\frac{t_\ell-t_{j-1}}{t_j-t_{j-1}}}\Big)\Big\|_{\HHH}^2
\;\ge\; \|DZ^{H,q}_1\|_{\HHH_1}^2,
\]
since $D_r Z^{H,q}_{\frac{t_k-t_{j-1}}{t_j-t_{j-1}}}=0$ for $r\in[0,1]$ and $k=1,\dots,j-1$ by \eqref{eq:adapt}.  
Thus,
\[
\big\| D(Z^{H,q}_{t_j}-Z^{H,q}_{t_{j-1}}) - {\rm proj}_{\hat{E}_{j-1}}(D(Z^{H,q}_{t_j}-Z^{H,q}_{t_{j-1}})) \big\|_{\HHH}^2
\;\ge\; \|DZ^{H,q}_1\|_{\HHH_1}^2,
\]
which completes the proof.
\end{proof}

\section{Proof of Theorem \ref{thm1}}\label{sec:proof}

We are now in position to prove our main result.  
Combining the determinant identity for the Malliavin matrix (Lemma \ref{Lemma:detGamma}) with the strong local nondeterminism property for Hermite processes (Theorem \ref{Thm:MLNDDiscrete}), we can apply the Bouleau--Hirsch criterion to conclude.  

\begin{proof} 
Let 
$
\mathbf{Z}=(Z^{H,q}_{t_1},\ldots, Z^{H,q}_{t_n}),
$
and denote by $\Gamma_{\mathbf{Z}}$ its Malliavin matrix.  
By the Bouleau--Hirsch criterion (Theorem \ref{Thm:2.1.2}), it is enough to prove that
\[
\mathbb{P}\!\left( \det (\Gamma_{\mathbf{Z}})>0\right)=1.
\]

By Lemma \ref{Lemma:detGamma}, we can factorize the determinant as
\begin{equation}\label{eq:detfactor}
\det(\Gamma_{\mathbf{Z}}) 
= \|DZ^{H,q}_{t_1}\|_{\HHH}^2
   \prod_{j=2}^n \big\|DZ^{H,q}_{t_j} - \proj_{E_{j-1}}(DZ^{H,q}_{t_j})\big\|_{\HHH}^2,
\end{equation}
where $E_{j-1}=\mathrm{span}\{DZ^{H,q}_{t_1},\ldots,DZ^{H,q}_{t_{j-1}}\}$.

Now, by the self-similarity of the Malliavin derivative of $Z^{H,q}$ (Proposition \ref{Prop:Zt9999}) and Lemma \ref{lem:positive}, the first factor is almost surely positive.
For each $j=2,\ldots,n$, Theorem \ref{Thm:MLNDDiscrete} yields
\[
\big\|DZ^{H,q}_{t_j} - {\rm proj}_{E_{j-1}}(DZ^{H,q}_{t_j})\big\|_{\HHH}^2
\;\overset{\text{law}}{\ge}\; (t_j-t_{j-1})^{2H}\,\|DZ^{H,q}_1\|_{\HHH_1}^2,
\]
with $\HHH_1=L^2([0,1])$.  
Since $\|DZ^{H,q}_1\|_{\HHH_1}^2>0$ almost surely (again by Lemma \ref{lem:positive}), it follows that each factor in \eqref{eq:detfactor} is almost surely positive.

Therefore $\det(\Gamma_{\mathbf{Z}})>0$ almost surely, and by Theorem \ref{Thm:2.1.2} the law of $$(Z^{H,q}_{t_1},\ldots,Z^{H,q}_{t_n})$$ admits a density with respect to Lebesgue measure.  
This completes the proof of Theorem \ref{thm1}.
\end{proof}

\section*{Acknowledgements}

I.N. gratefully acknowledges support from the Luxembourg National Research Fund (Grant O22/17372844/FraMStA).  C. T.  acknowledges support from   the  ANR project SDAIM 22-CE40-0015,  MATHAMSUD grant 24-MATH-04 SDE-EXPLORE and  by the Ministry of Research, Innovation and Digitalization (Romania), grant CF-194-PNRR-III-C9-2023.
Part of this work was carried out while Y. N. was at the University of Luxembourg under a grant funding from the European Union's Horizon 2020 research and innovation programme N° 811017.

\end{document}